\renewcommand{\bar}[1]{#1\llap{$\overline{\phantom{\rm#1}}$}}
\DeclareMathOperator{\Spec}{Spec}
\DeclareMathOperator{\Hom}{Hom}
\DeclareMathOperator{\id}{id}
\DeclareMathOperator{\Per}{Per}
\DeclareMathOperator{\PGL}{PGL}
\DeclareMathOperator{\GL}{GL}
\DeclareMathOperator{\Gal}{Gal}
\DeclareMathOperator{\Rat}{Rat}
\DeclareMathOperator{\Twist}{Twist}
\DeclareMathOperator{\Br}{Br}
\DeclareMathOperator{\Conj}{Conj}
\newcommand{\col}{\,{:}\,}
\newcommand{\tth}{^{\operatorname{th}}}
\theoremstyle{plain}
\newtheorem{thm}{Theorem}
\newtheorem{prop}[thm]{Proposition}
\theoremstyle{definition}
\newtheorem{defn}[thm]{Definition}
\newtheorem{exmp}[thm]{Example}
\newtheorem*{rem}{Remark}
\theoremstyle{remark}
\def\Q{\mathbb{Q}}
\def\P{\mathbb{P}}
\def\calA{\mathcal{A}}
\begin{document}
\title{The field of definition for dynamical systems on $\mathbb{P}^{N}$}
\author[Hutz, Manes]
{Benjamin Hutz and Michelle Manes}

\address{Department of Mathematical Sciences \\
        Florida Institute of Technology \\
        Melbourne, FL 32901 \\
        USA}
\email{bhutz@fit.edu}

\address{Department of Mathematics \\
        University of Hawaii \\
        Honolulu, HI 96822 \\
        USA}
\email{mmanes@math.hawaii.edu}

\begin{abstract}
    Let $\Hom^N_d$ be the set of morphisms $\phi: \mathbb{P}^{N} \to \mathbb{P}^{N}$ of degree $d$.  For $f \in \PGL_{N+1}$, let $\phi^f = f^{-1} \circ \phi \circ f$ be the conjugation action and let $M^N_d = \Hom_d^N/\PGL_{N+1}$ be the moduli space of degree $d$ morphisms of $\P^N$. A \emph{field of definition} for $\chi \in M_d^N$ is a field over which at least one morphism $\phi \in \chi$ is defined. The \emph{field of moduli} for $\chi$ is the fixed field of $\{\sigma \in \Gal(\bar{K}/K) \col \chi^{\sigma} = \chi\}$. Every field of definition contains the field of moduli. In this article, we give a sufficient condition for the field of moduli to be a field of definition for morphisms whose stabilizer group  is trivial. 
\end{abstract}

\thanks{The  second author was partially supported by NSF grant  DMS-1102858. }

\subjclass[2010]{37P45 (primary); 11G99 (secondary) }

\maketitle

 \section{Introduction}
    We begin by fixing some notation:  $K$ is a field with fixed separable closure $\overline{K}$, and     $G_K = \Gal(\overline{K}/K)$.  Let $\Rat^N_d$ be the set of rational maps
    $\mathbb{P}^{N} \to \mathbb{P}^{N}$ of
    (algebraic) degree $d$ defined over $\overline K$ (meaning that $\phi$ is given in coordinates by homogeneous polynomials of degree~$d$), and   $\Hom_d^N\subset \Rat_d^N$ is the subset of morphisms.  We write $\Rat^N_d(K)$ and $\Hom_d^N(K)$ to indicate the subset of rational maps (respectively morphisms) that can be written in each coordinate as homogeneous polynomials with coefficients in~$K$.

    Let $\phi \in \Hom_d^N$.  For $f \in \PGL_{N+1}$, let $\phi^f = f^{-1} \circ \phi \circ f$ be the conjugation action and let $M^N_d = \Hom_d^N/\PGL_{N+1}$ be the quotient space for this action.  We know that $M_d^N$ exists as a geometric quotient \cite{levy}; hence, we call $M_d^N$ the moduli space of dynamical systems of degree $d$ morphisms of $\P^N$, and we will denote the conjugacy class of $\phi$ in $M_d^N$ by $[\phi]$.

    \begin{defn}
        Let $\phi \in \Hom_d^N$.
        A field $K'/K$ is a \emph{field of definition} for $\phi$ if
        $\phi^f \in \Hom_d^N(K')$ for some $f \in \PGL_{N+1}$.
    \end{defn}

    Of course, a given map $\phi$ has many fields of definition, but we are generally interested in finding a minimal such field.
    We can also define
    a minimal field for $\phi$ in a Galois theoretic sense.  First note that an element
    $\sigma \in G_K$ acts on a map $\phi \in \Hom_d^N$ by acting on its
    coefficients.  We denote this left action by $\phi^{\sigma}$.  Using Hilbert's
    Theorem 90, we see that $\phi$ is defined over $K$ if and only if $\phi$ is
    fixed by every element in $G_K$.  Furthermore, if $\psi = \phi^f$ then $\psi^{\sigma} = (\phi^\sigma)^{f^{\sigma}}$.  So the action of $G_K$ on
    $\Hom_d^N$ descends to an action on $M_d^N$.  We make the following
    definition.

    \begin{defn}
        Let $\phi \in \Hom_d^N$, and define
        \begin{equation*}
           G_\phi =  \{\sigma \in G_K \mid \phi^{\sigma} \text{ is } \overline K \text{ equivalent to } \phi\}.
        \end{equation*}
        The \emph{field of moduli} of $\phi$ is the fixed field $\overline K^{G_\phi}$.
    \end{defn}

    In other words, the field of moduli of $\phi$ is the smallest field $L$ with the
    property that for every $\sigma \in\Gal(\overline{K}/L)$ there is some $f_{\sigma} \in \PGL_{N+1}$ such that $\phi^{\sigma} = \phi^{f_{\sigma}}$. It is not hard to see that the field of moduli of $\phi$ must be contained in any field of definition of $\phi$.
    A fundamental question is:
    \begin{center}
    	\emph{When is the field of moduli also a field of definition?}
    \end{center}

    For $[\phi] \in M_d^1$, Silverman~\cite{Silverman12} proved that when the degree of $\phi$ is even, or when $\phi$ is a polynomial map of any degree,  the field of moduli is also a field of definition.
        In the present work, we solve the higher dimension problem for maps with a trivial stabilizer group.

            \begin{defn}
        For any $\phi \in \Hom_d^N$ define $\mathcal{A}_\phi$ to be the \emph{stabilizer group}  of $\phi$, i.e.,
        \[
            \calA_\phi = \{f \in \PGL_{N+1} \mid \phi^f = \phi\}.
        \]
    \end{defn}
           From~\cite{petsche}, $\mathcal{A}_{\phi}$ is well-defined as a finite subgroup of $\PGL_{N+1}$.  For a conjugacy class $[\phi] \in M_d^N$, however, the stabilizer group is only defined up to conjugacy class in $\PGL_{N+1}$.
  For most $\phi \in \Hom_d^N$ (all but a Zariski closed subset),  $\mathcal{A}_\phi$ is trivial.

\begin{thm} \label{thm.main}
        Let $[\phi] \in M_d^N(K)$, with $\mathcal{A}_{\phi} = \{1\}$. Let $D_n = \sum_{i=0}^N d^{ni}$. If there is a positive integer $n$ such that $\gcd(D_n,N+1)=1$, then $K$ is a field of definition for $[\phi]$.
	\end{thm}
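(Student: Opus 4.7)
The plan is to translate the statement into a Galois cohomology obstruction and then kill that obstruction using fixed points of iterates of $\phi$. Since $\calA_\phi = \{1\}$ and $[\phi] \in M_d^N(K)$, for each $\sigma \in G_K$ there is a \emph{unique} $f_\sigma \in \PGL_{N+1}(\overline K)$ with $\phi^\sigma = \phi^{f_\sigma}$. A short computation using $(\phi^f)^\sigma = (\phi^\sigma)^{f^\sigma}$ and the uniqueness of $f_\sigma$ shows that $\sigma \mapsto f_\sigma$ satisfies the cocycle relation, and hence defines a class $\xi \in H^1(G_K, \PGL_{N+1})$. A further standard check shows that $K$ is a field of definition for $[\phi]$ if and only if $\xi$ is trivial.

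Next I would interpret $\xi$ geometrically. Via the usual exact sequence $1 \to \overline K^\times \to \GL_{N+1} \to \PGL_{N+1} \to 1$, the class $\xi$ corresponds to a Brauer--Severi variety $X/K$ of dimension $N$, i.e., a twist of $\P^N$, whose class $[X] \in \Br(K)$ has order dividing $N+1$. Moreover, because the cocycle $\{f_\sigma\}$ was built so that it conjugates $\phi$ into its Galois conjugates, the morphism $\phi$ descends to a $K$-morphism $\tilde\phi : X \to X$; likewise each iterate $\tilde\phi^n$ is defined over $K$. The cocycle $\xi$ is trivial iff $X \cong \P^N$ over $K$, iff $X$ has a $K$-rational point.

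To produce enough $K$-rational structure on $X$, I would use fixed points of $\tilde\phi^n$. Over $\overline K$, via the isomorphism $X_{\overline K} \cong \P^N$, the morphism $\tilde\phi^n$ is conjugate to $\phi^n$, a morphism of degree $d^n$ on $\P^N$. The projective fixed-point/B\'ezout count gives a zero-cycle (the ``fixed cycle'' of $\tilde\phi^n$) of total degree
\[
1 + d^n + d^{2n} + \cdots + d^{nN} = D_n,
\]
and this fixed cycle is canonically defined and hence $G_K$-stable, giving a $K$-rational effective zero-cycle on $X$ of degree $D_n$. By the standard theory of Brauer--Severi varieties, the index of $X$ (the gcd of degrees of $K$-rational zero-cycles) divides $N+1$, and it clearly also divides $D_n$. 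The hypothesis $\gcd(D_n, N+1)=1$ therefore forces the index to be $1$, so $X$ has a $K$-rational point, whence $X \cong \P^N/K$ and $\xi$ is trivial, as desired.

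The main obstacle is making the ``fixed cycle of $\tilde\phi^n$'' argument rigorous in the possibly non-generic case where the fixed scheme of $\tilde\phi^n$ has positive-dimensional components, so that one genuinely has a $K$-rational zero-cycle of degree $D_n$ and not merely a fixed subscheme of mixed dimension. The cleanest route is to realize the fixed locus as $\Delta \cap \Gamma_{\tilde\phi^n}$ inside $X \times X$ (both defined over $K$) and take the corresponding intersection class, whose degree is computed over $\overline K$ by B\'ezout on $\P^N \times \P^N$ to be $D_n$. A secondary point to verify carefully is that the descent of $\phi$ to $\tilde\phi : X \to X$ really is a morphism over $K$, which follows from the cocycle identity satisfied by $\{f_\sigma\}$ together with triviality of $\calA_\phi$.
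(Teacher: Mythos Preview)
Your proposal is correct and follows essentially the same route as the paper: build the cocycle from the unique $f_\sigma$'s, interpret it as a Brauer--Severi variety $X$ on which $\phi$ descends to a $K$-morphism, and use the intersection-theoretic fixed locus $\Delta^*(\Gamma_{\tilde\phi^n})$ as a $K$-rational zero-cycle of degree $D_n$ to force $X(K)\neq\emptyset$. The ``standard theory'' fact you invoke (that the index of a dimension-$N$ Brauer--Severi variety divides $N+1$, so a zero-cycle of degree prime to $N+1$ yields a rational point) is exactly what the paper isolates and proves separately as Proposition~\ref{prop3}.
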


        While the proof of the one-dimensional result in~\cite{Silverman12} was different in the cases of trivial versus nontrivial stabilizer group, the end result was independent of $\mathcal{A}_{\phi}$. In higher dimensions, this is not the case. The map
	\begin{equation*}
		\phi = [i(x-y)^3, (x+y)^3, z^3] \in \Hom_3^2		
	\end{equation*}
	has field of moduli strictly smaller than any field of definition.  Specifically, the field of moduli for $[\phi]$ is $\Q$, but $\Q$ is not a field of definition.  The map seems to satisfy the hypotheses of Theorem~\ref{thm.main} since $\gcd(D_1,N+1) =1$.  However, it is a simple matter to check that $\mathcal{A}_{\phi} = C_2$, the cyclic group of order two.  So the hypothesis on the stabilizer group in Theorem~\ref{thm.main} is, in fact, essential. (See Section~\ref{sect_examples} for more on this example.)

   Silverman \cite{Silverman12} also shows that his result is the best possible by giving an example in every odd degree of a map with field of moduli strictly smaller than any field of definition, as follows.

        \begin{exmp}\label{exmp1}
        Let $\alpha(z) = i\left(\frac{z-1}{z+1}\right)^d$ with $d$ odd.  Clearly $\Q(i)$ is a field of definition for $\alpha$.  Furthermore, if we let $\sigma$ represent complex conjugation, then we can check that
        \[
        \alpha^{\sigma} = \alpha^f \qquad \text{ for } \qquad f(z) = -\frac 1 z.
        \]
        Hence, $\Q$ is the field of moduli for $\alpha$.  But $\Q$ cannot be a field of definition for $\alpha$ because any such field must have $-1$ as a sum of two squares.  (See~\cite{Silverman12} for details.)          
    \end{exmp}

    For  $\phi \in \Hom_d^N$, the field of moduli $K'$ is also a field of definition precisely when $[\phi]$ is in the image of the
    projection $\Hom_d^N(K') \to M_d^N(K')$.  Example \ref{exmp1} shows that this map may not always be surjective.  We provide higher dimensional examples of this phenomenon in Section~\ref{sect_examples}.

\subsection*{Outline}
   In Section~\ref{sect.first.constructions}, we generalize to $\mathbb{P}^{N}$ some results from \cite{Silverman12} regarding morphisms on $\mathbb{P}^1$. The methods are essentially the same as in \cite{Silverman12}, with the details adjusted for arbitrary dimension. The proof of Theorem~\ref{thm.main} occupies Section~\ref{sect.trivial.stabilizer}.   Section~\ref{sect_examples} provides  details for the two examples described above.

\section{First Constructions} \label{sect.first.constructions}
 We begin this section by briefly developing the idea of $K$-twists of rational maps.

        \begin{defn}
        Two maps $\phi,\psi \in \Hom_d^N(K)$ are \emph{$K$-equivalent} if $\psi = \phi^f$ for some $f \in \PGL_{N+1}(K)$.

    \end{defn}
            Maps that are $K$-equivalent
         have the same arithmetic dynamical behavior over $K$.  In particular, $f$ maps $\psi$-orbits of $K$-rational points to $\phi$-orbits of $K$-rational points.  Further, the field extension of $K$ generated by the period-$n$ points of $\phi$ and $\psi$ must agree for every $n\geq 1$.
However, maps that are $\overline{K}$-equivalent but not $K$-equivalent may exhibit very different dynamics on $\P^N(K)$.

    \begin{defn}
        Let $\phi \in \Hom_d^N(K)$.  The map $\psi \in \Hom_d^N(K)$ is a \emph{$K$-twist} of $\phi$ if $\psi$ and $\phi$ are $\overline{K}$-equivalent.  Further, we define
        \begin{align*}
             \Twist_K(\phi)
             &= \frac{ \{K\text{-twists of } \phi\}}{K\text{-equivalence}} \\
             &=
             \{ \text{$K$-equivalence classes of maps
             defined over $K$ and $\overline K$-equivalent to $\phi$} \}.
        \end{align*}
    \end{defn}

    \begin{exmp}
        Let
        $$\phi(z) = 2z + \frac 5 z \quad  \text{ and } \quad \psi(z) = \frac{z^2-3z}{3z-1}.$$
          One can check that
        $$
        \phi^f(z) = \psi(z) \quad \text{ where } \quad f(z) = \frac{i\sqrt 5 \left(z - 1\right)}{1+z},
        $$
        so these maps are $\Q$-twists of each other.  However, they cannot be $\Q$-equivalent because the finite fixed points of $\psi(z)$ are rational (they are $z = -1$ and $z=0$),  but the finite fixed points of $\phi(z)$ are $\pm i\sqrt 5$.
    \end{exmp}

    \begin{rem}
        For $\phi,\psi \in \Hom_d^N(K)$,  $\psi$ is a $K$-twist of $\phi$ if and only if $[\phi]=[\psi]$ in $M_d^N$.  Thus, the study of twists may be viewed as the study of the fibers of $$[\cdot]: \Hom_d^N(K)/\PGL_{N+1}(K) \to M_d^N(K),$$ where we define
        \begin{equation}\label{eqn.mdk}
            M_d^N(K) = \left\{\xi \in M_d^N \colon \forall \sigma \in G_K,  \xi^{\sigma} = \xi \right\}.
        \end{equation}
    \end{rem}
One checks easily (or see~\cite[Proposition~$4.73$]{ads})  that if $\calA_\phi = \{\id\}$, then $\#\Twist(\phi) = 1$.

    \begin{prop} \label{prop1}
        Let $\xi \in M_d^N(K)$ be a dynamical system whose field of moduli is
        contained in $K$, and let $\phi \in \xi$ be any representative of
        $\xi$.
        \begin{enumerate}
            \item \label{prop1_item1} For every $\sigma \in G_K$, there exists an $f_{\sigma} \in
                \PGL_{N+1}$ such that
                \begin{equation*}
                    \phi^{\sigma} = \phi^{f_{\sigma}}.
                \end{equation*}
                The map $f_{\sigma}$ is determined by $\phi$ up
                to \textup(left\textup) multiplication by an element of $\mathcal{A}_{\phi}$.

            \item \label{prop1_item2} Having chosen $f_{\sigma}$'s as in \textup(\ref{prop1_item1}\textup), the resulting map
                $f:G_K \to \PGL_{N+1}$ satisfies
                \begin{equation*}
                    f_{\sigma}f_{\tau}^{\sigma}f^{-1}_{\sigma\tau} \in
                    \mathcal{A}_{\phi} \quad \text{for all } \sigma, \tau \in G_K.
                \end{equation*}
                We say that $f$ is a $G_K$-to-$\PGL_{N+1}$ cocycle relative to
                $\mathcal{A}_{\phi}$.

            \item \label{prop1_item3} Let $\Phi \in \xi$ be any other representative of
                $\xi$, and for each $\sigma \in G_K$ choose an automorphism $F_{\sigma}
                \in \PGL_{N+1}$ as in \textup(\ref{prop1_item1}\textup) so that $\Phi^{\sigma} = \Phi^{F_{\sigma}}$.
                Then there exists a $g \in \PGL_{N+1}$ such that
                \begin{equation} \label{eq1}
                    g^{-1}F_{\sigma}g^{\sigma}f^{-1}_{\sigma} \in \mathcal{A}_{\phi}\quad
                    \text{for all } \sigma \in G_K.
                \end{equation}
                We say that $f$ and $F$ are $G_K$-to-$\PGL_{N+1}$ cohomologous
                relative to $\mathcal{A}_{\phi}$.

            \item \label{prop1_item4} The field $K$ is a field of definition for $\xi$ if and only
                if there exists a $g \in \PGL_{N+1}$ such that
                \begin{equation*}
                    g^{-1}g^{\sigma}f^{-1}_{\sigma} \in \mathcal{A}_{\phi} \quad
                    \text{for all } \sigma \in G_K.
                \end{equation*}
            \end{enumerate}
    \end{prop}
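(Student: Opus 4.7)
For part (a), the plan is to use directly that $\xi \in M_d^N(K)$ means $\xi^{\sigma} = \xi$ for every $\sigma \in G_K$, by the definition of $M_d^N(K)$ in~\eqref{eqn.mdk}. For a chosen representative $\phi \in \xi$, this reads $[\phi^{\sigma}] = [\phi]$, which unpacks as $\phi^{\sigma}$ and $\phi$ being $\overline{K}$-equivalent, and so $\phi^{\sigma} = \phi^{f_{\sigma}}$ for some $f_{\sigma} \in \PGL_{N+1}$. For uniqueness, if $f_{\sigma}$ and $f_{\sigma}'$ both work, then $\phi^{f_{\sigma}} = \phi^{f_{\sigma}'}$, and a one-line application of the $\PGL_{N+1}$-action identifies $f_{\sigma}'f_{\sigma}^{-1}$ with an element of $\mathcal{A}_{\phi}$. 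For part (b), I will compute $\phi^{\sigma\tau}$ in two ways: directly as $\phi^{f_{\sigma\tau}}$, and also by iterating the action and using the intertwining $(\phi^h)^{\sigma} = (\phi^{\sigma})^{h^{\sigma}}$ to rewrite $\phi^{\sigma\tau}$ as $\phi$ conjugated by a product involving $f_{\sigma}$ and $f_{\tau}^{\sigma}$. Equating the two and invoking the uniqueness from~(a) yields the cocycle identity.

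For part~(c), since $\Phi$ and $\phi$ both represent $\xi$, they differ by a conjugation $\Phi = \phi^g$ with $g \in \PGL_{N+1}$. I will again compute $\Phi^{\sigma}$ in two ways: as $\Phi^{F_{\sigma}}$ by the defining property of $F_{\sigma}$, and also by expanding $(\phi^g)^{\sigma} = \phi^{f_{\sigma} g^{\sigma}}$. Rewriting the latter back in terms of $\Phi$ via $\phi = \Phi^{g^{-1}}$, equating the two, and using the conjugacy relation $\mathcal{A}_{\Phi} = g^{-1}\mathcal{A}_{\phi}g$ between the stabilizers, produces the coboundary-type relation in the claim.

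For part~(d), the key reformulation is that $K$ is a field of definition for $\xi$ precisely when some $\Phi = \phi^g \in \xi$ has $\Phi^{\sigma} = \Phi$ for every $\sigma \in G_K$, which, by the Hilbert 90 remark from the introduction applied coefficient-wise, is equivalent to $\Phi$ being defined over $K$. In the language of~(a)--(c), this is exactly the statement that we may take $F_{\sigma} = \id$ (or more generally $F_{\sigma} \in \mathcal{A}_{\Phi}$) for every $\sigma$. Substituting $F_{\sigma} = \id$ into the cohomology relation of~(c) and rearranging gives the stated criterion $g^{-1} g^{\sigma} f_{\sigma}^{-1} \in \mathcal{A}_{\phi}$; conversely, given such a $g$, setting $\Phi = \phi^g$ and running~(c) in reverse verifies $\Phi^{\sigma} = \Phi$, so $\Phi \in \Hom_d^N(K)$.

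The main anticipated obstacle is bookkeeping the conventions --- in particular the order convention for the composite Galois element $\sigma\tau$ (which determines whether the cocycle reads $f_{\sigma} f_{\tau}^{\sigma}$ or $f_{\tau} f_{\sigma}^{\tau}$) and the side on which $\mathcal{A}_{\phi}$ absorbs ambiguity in the relation $\phi^x = \phi^y \iff xy^{-1} \in \mathcal{A}_{\phi}$. Once the convention is fixed in~(a), it has to be propagated consistently through~(b), (c), and~(d) so that the formulas come out exactly as asserted; each individual algebraic step itself is only a few lines of symbol pushing once the convention is set.
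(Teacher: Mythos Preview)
Your proposal is correct and follows essentially the same approach as the paper: existence of $f_\sigma$ from the definition of $M_d^N(K)$, uniqueness up to $\mathcal{A}_\phi$, the two-ways computation of $\phi^{\sigma\tau}$ for the cocycle relation, and specializing (c) with $F_\sigma = \id$ for (d). The only cosmetic difference is that in (c) the paper sets $\phi = \Phi^g$ rather than your $\Phi = \phi^g$, which lets the computation land directly in $\mathcal{A}_\phi$ without the detour through $\mathcal{A}_\Phi = g^{-1}\mathcal{A}_\phi g$; your route works too after the harmless relabeling $g \mapsto g^{-1}$ (and in fact you can avoid the detour entirely by comparing $\phi^{f_\sigma g^\sigma}$ with $\phi^{gF_\sigma}$).
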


    \begin{rem}
        For $\mathcal{A}_{\xi} = 1$, Proposition \ref{prop1} says that the map $f:
        G_K \to \PGL_{N+1}$ is a one-cocycle whose cohomology class in the cohomology
        set $H^{1}(G_K,\PGL_{N+1})$ depends only on $\xi$.  On the other hand, if
        $\mathcal{A}_{\xi}\neq1$, then the criterion in equation (\ref{eq1}) is not an equivalence
        relation, so we cannot even define a ``cohomology set relative to
        $\mathcal{A}_{\phi}$.''
    \end{rem}

    \begin{proof}
       \begin{enumerate}
       	\item
            Fix $\sigma \in G_K$.  By definition of $M_d^N(K)$ in (\ref{eqn.mdk}),
            there exists an
            $f_{\sigma}$ such that
            \begin{equation*}
                \phi^{\sigma} = \phi^{f_{\sigma}}.
            \end{equation*}
            Suppose that $g_{\sigma} \in \PGL_{N+1}$ has the same
            property.  Then
            \begin{equation*}
                \phi^{g_{\sigma}} = \phi^{\sigma} = \phi^{f_{\sigma}},
            \end{equation*}
            and consequently
            \begin{equation*}
                g_{\sigma}f_{\sigma}^{-1} \in \mathcal{A}_{\phi}.
            \end{equation*}

        \item
            Let $\sigma, \tau \in G_K$.  We compute
            \begin{equation*}
                \phi^{f_{\sigma\tau}} = \phi^{\sigma\tau} =
                (\phi^{\tau})^{\sigma} = (\phi^{f_{\tau}})^{\sigma} =
                (\phi^{\sigma})^{f^{\sigma}_{\tau}} =
                \phi^{f_{\sigma}f^{\sigma}_{\tau}}.
            \end{equation*}
            Hence, $f_{\sigma}f_{\tau}^{\sigma}f_{\sigma\tau}^{-1} \in
           \mathcal{A}_{\phi}$.

        \item
            Since $[\phi]=\xi=[\Phi]$, we can find some $g \in \PGL_{N+1}$
            so that $\phi = \Phi^g$.  Then for any $\sigma \in G_K$ we compute
            \begin{equation*}
                \phi^{f_{\sigma}} = \phi^{\sigma} = (\Phi^g)^{\sigma} =
                (\Phi^{\sigma})^{g^{\sigma}} = (\Phi^{F_{\sigma}})^{g^{\sigma}} =
                \phi^{g^{-1}F_{\sigma}g^{\sigma}}.
            \end{equation*}
            Hence, $g^{-1}F_{\sigma}g^{\sigma}f_{\sigma}^{-1} \in
            \mathcal{A}_{\phi}$.

        \item
            Suppose first that $K$ is a field of definition for $\xi$, so
            there is a map $\Phi \in \xi$ which is defined over $K$. Then
            $\Phi^{\sigma} = \Phi$ for all $\sigma \in G_K$, so in (c) we let
            $F_{\sigma}$ be 1, which gives the desired result.

            Now suppose there is a $g \in \PGL_{N+1}$ such that
            $g^{-1}g^{\sigma}f^{-1}_{\sigma} \in \mathcal{A}_{\phi}$ for all $\sigma \in
            G_K$.  Let $h = g^{-1}g^{\sigma}f_{\sigma}^{-1} \in
            \mathcal{A}_{\phi}$ and let $\Phi=\phi^{g^{-1}} \in \xi$.  Note that $h^{-1} \in \calA_{\phi}$ and $h^{-1}g^{-1} = f_{\sigma}(g^{-1})^{\sigma}$. Then
            \begin{equation*}
                \Phi^{\sigma} = \left(\phi^{g^{-1}}\right)^{\sigma} =
                (\phi^{\sigma})^{(g^{-1})^{\sigma}} =
                \phi^{f_{\sigma}(g^{-1})^{\sigma}} =
                \phi^{h^{-1}g^{-1}}=
               \phi^{g^{-1}} =\Phi.
            \end{equation*}
            Hence, $\Phi$ is defined over $K$, so $K$ is a field of definition for
            $\xi$.\qedhere
        \end{enumerate}
    \end{proof}

    \begin{defn}
        A scheme $X/K$ is called a \emph{Brauer-Severi variety} of dimension $N$
        if there exists a finite, separable field extension $L/K$ such that
        $X \otimes_{K} \Spec L$ is isomorphic to $\mathbb{P}^{N}_L$.  Two
        Brauer-Severi varieties are considered \emph{equivalent} if they are
        isomorphic over $K$.
    \end{defn}

    \begin{prop} \label{prop2}
        \mbox{}
        \begin{enumerate}
            \item \label{prop2_item1} There is a one-to-one correspondence between the set of
              Brauer-Severi varieties of dimension $N$ up to equivalence and the
              cohomology set $H^{1}(G_K,\PGL_{N+1})$.  This correspondence is defined
              as follows:  Let $X/K$ be a Brauer-Severi variety of dimension $N$ and
              choose a $\overline{K}$-isomorphism $j: \mathbb{P}^{N} \to X$.  Then
              the associated cohomology class $c_X \in H^{1}(G_k,\PGL_{N+1})$ is given
              by the cocycle
              \begin{equation*}
                G_K \to \PGL_{N+1}, \quad \sigma \mapsto j^{-1} \circ j^{\sigma}.
              \end{equation*}

            \item  \label{prop2_item2} The following conditions are equivalent:
                \begin{enumerate}
                    \item \label{prop2_item2_1} $X$ is $K$-isomorphic to $\mathbb{P}^{N}$.
                    \item \label{prop2_item2_2} $X(K) \neq \emptyset$.
                    \item \label{prop2_item2_3} $c_X = 1$.
                \end{enumerate}
        \end{enumerate}
    \end{prop}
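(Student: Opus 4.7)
My plan for part (a) is direct computation followed by Galois descent. The cocycle relation $c_\sigma c_\tau^\sigma = c_{\sigma\tau}$, where $c_\sigma := j^{-1} \circ j^\sigma$, follows from the identity $(j^{-1} j^\sigma)\bigl((j^\sigma)^{-1} j^{\sigma\tau}\bigr) = j^{-1} j^{\sigma\tau}$. Replacing $j$ by $j \circ g$ for some $g \in \PGL_{N+1}$ yields the cohomologous cocycle $g^{-1} c_\sigma g^\sigma$, so the class $c_X \in H^1(G_K, \PGL_{N+1})$ is independent of the choice of $j$. A $K$-isomorphism $X \to X'$ of Brauer-Severi varieties transports $j$ for $X$ into a compatible $j'$ for $X'$ producing the same cocycle; and conversely, cohomologous cocycles yield a Galois-invariant composition $X \to X'$, giving injectivity of the assignment. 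Surjectivity follows from effective Galois descent: a cocycle trivialized over a finite Galois extension $L/K$ serves as descent data for twisting $\mathbb{P}^N_L$ into a quasi-projective $K$-scheme $X_c$ with $c_{X_c} = c$.

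For part (b), the equivalence (i)~$\Leftrightarrow$~(iii) is formal once (a) is in hand. If $X \cong_K \mathbb{P}^N$ we may choose $j$ defined over $K$, giving $c_\sigma = 1$ for all $\sigma$; conversely, if $c_\sigma = g^{-1} g^\sigma$ is a coboundary, then $(jg^{-1})^\sigma = jg^{-1}$, so $jg^{-1}$ descends to a $K$-isomorphism $\mathbb{P}^N \to X$. The implication (i)~$\Rightarrow$~(ii) is immediate since $\mathbb{P}^N(K) \neq \emptyset$.

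The main obstacle is (ii)~$\Rightarrow$~(iii), for which I would invoke the Brauer group. The short exact sequence $1 \to \mathbb{G}_m \to \GL_{N+1} \to \PGL_{N+1} \to 1$ combined with Hilbert's Theorem 90 (which gives $H^1(G_K, \GL_{N+1}) = 0$) produces an injection $\delta: H^1(G_K, \PGL_{N+1}) \hookrightarrow H^2(G_K, \mathbb{G}_m) = \Br(K)$. The class $\delta(c_X)$ is precisely the obstruction to descending the Galois-invariant line bundle class $[\mathcal{O}(1)] \in \mathrm{Pic}(X_{\overline{K}})^{G_K}$ to $\mathrm{Pic}(X)$, since it sits in the low-degree exact sequence $0 \to \mathrm{Pic}(X) \to \mathrm{Pic}(X_{\overline{K}})^{G_K} \to \Br(K) \xrightarrow{\pi^*} \Br(X)$ of the Hochschild-Serre spectral sequence for $\mathbb{G}_m$. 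By exactness, $\delta(c_X)$ lies in the kernel of $\pi^*$. A $K$-rational point $P: \Spec K \to X$ is a section of the structure map $\pi$, so $P^* \circ \pi^* = \mathrm{id}$ on $\Br(K)$, showing $\pi^*$ is injective whenever $X(K) \neq \emptyset$. We therefore conclude $\delta(c_X) = 0$, and injectivity of $\delta$ finally gives $c_X = 1$ in $H^1(G_K, \PGL_{N+1})$.
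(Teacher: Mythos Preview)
Your argument is correct. The paper, however, does not actually prove this proposition: it simply cites Serre's \emph{Local Fields} (Chapter~X, \S6 and Exercise~X.6.1) and Jahnel's survey for both parts, treating the result as standard background. What you have written is essentially a self-contained unpacking of those references. In particular, your route for (ii)~$\Rightarrow$~(iii) via the Hochschild--Serre sequence $0 \to \mathrm{Pic}(X) \to \mathrm{Pic}(X_{\overline K})^{G_K} \to \Br(K) \to \Br(X)$ and the retraction furnished by a rational point is one of the standard proofs; the only step you might want to flag more carefully is the identification of $\delta(c_X)$ with the image of $[\mathcal O(1)]$ under the edge map, which is true but requires a short diagram chase comparing the two boundary maps. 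Otherwise the content matches what the cited sources contain, just made explicit.
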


    \begin{proof}
        Brauer-Severi varieties are discussed by Serre \cite{Serre2} and in more detail by Jahnel
        \cite{Jahnel}.  In particular, (\ref{prop2_item1}) is discussed in \cite[X.6]{Serre2} and the
        equivalence of (\ref{prop2_item2_1}) and (\ref{prop2_item2_3}) follow from (\ref{prop2_item1}).  The equivalence of (\ref{prop2_item2_1}) and (\ref{prop2_item2_2}) is \cite[Exercise X.6.1]{Serre2} and also \cite[Proposition 4.8]{Jahnel}.
    \end{proof}

    \begin{defn}
        The \emph{Brauer group} of a field $K$ is given by
        \begin{equation*}
            \Br(K) \cong H^{2}(G_K,\overline{K}^{\ast}).
        \end{equation*}
    \end{defn}

    \begin{prop} \label{prop3}
        Let $X/K$ be a Brauer-Severi variety of dimension $N$.  If there is a zero-cycle $D$  on $X$
        such that $D$ is defined over $K$ and $\deg(D)$ is relatively
        prime to $N+1$, then $X(K)\neq \emptyset$.
    \end{prop}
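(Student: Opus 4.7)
The plan is to translate the statement about $X$ into a statement about its class in $\Br(K)$ and use that the period of this class divides both $N+1$ and $\deg(D)$.

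First, I would combine the two canonical central extensions
\begin{equation*}
    1 \to \overline{K}^{\ast} \to \GL_{N+1}(\overline{K}) \to \PGL_{N+1}(\overline{K}) \to 1
    \quad\text{and}\quad
    1 \to \mu_{N+1} \to \SL_{N+1}(\overline{K}) \to \PGL_{N+1}(\overline{K}) \to 1
\end{equation*}
to produce a connecting map $\delta\col H^{1}(G_K,\PGL_{N+1}) \to H^{2}(G_K,\overline{K}^{\ast}) = \Br(K)$ that factors through $H^{2}(G_K,\mu_{N+1})$. By Hilbert~90 in its non-abelian form, $H^{1}(G_K,\GL_{N+1}) = 1$, so the long exact sequence of pointed sets shows $\delta$ has trivial kernel. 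Combining with Proposition~\ref{prop2}, it suffices to prove that $\delta(c_X)=0$ in $\Br(K)$.

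Next, I would observe two divisibility constraints on the order $p$ of $\delta(c_X)$ in $\Br(K)$. Since $\delta(c_X)$ lifts to $H^{2}(G_K,\mu_{N+1})$, which is annihilated by $N+1$, we get $p \mid N+1$. For the second constraint, write $D = \sum_P n_P\,[P]$ with $P$ running over closed points of $X$ and set $L_P = k(P)$, a finite separable extension of $K$ of degree $[L_P:K]$. Each $P$ yields an $L_P$-rational point of $X$, so $X \otimes_K L_P \cong \mathbb{P}^{N}_{L_P}$ by Proposition~\ref{prop2}, and therefore the restriction of $\delta(c_X)$ to $\Br(L_P)$ is trivial. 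The standard identity $\mathrm{cor}_{L_P/K}\circ\mathrm{res}_{L_P/K} = [L_P:K]\cdot\mathrm{id}$ then forces $[L_P:K]\,\delta(c_X) = 0$, so $p \mid [L_P:K]$ for every $P \in \mathrm{supp}(D)$.

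Finally, $\deg(D) = \sum_P n_P [L_P:K]$ is a $\mathbb{Z}$-linear combination of the $[L_P:K]$, so $p$ divides $\deg(D)$ as well. Hence $p$ divides $\gcd(\deg(D),N+1) = 1$, which forces $\delta(c_X)=0$, and therefore $X(K)\neq\emptyset$ by Proposition~\ref{prop2}. The only step requiring real care is the injectivity of $\delta$ and the fact that $\delta(c_X)$ is $(N+1)$-torsion; both are classical but should be cited or sketched precisely so that the two divisibility facts combine cleanly at the end.
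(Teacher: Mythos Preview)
Your proof is correct and takes a genuinely different, more direct route than the paper's. Both arguments begin by embedding $c_X$ into $\Br(K)[N{+}1]$ via the connecting maps coming from the $\SL_{N+1}$ and $\GL_{N+1}$ central extensions. From there, however, the paper proceeds indirectly: for each prime $p$ coprime to $N+1$ it runs a $p$-Sylow restriction argument (with a sizeable inflation/restriction diagram) to manufacture a $K$-rational zero-cycle $D_p$ of degree prime to $p$; it then takes an integer combination of $D$ with the $D_p$'s to obtain a $K$-rational zero-cycle of degree~$1$, and finally invokes an external result (cited to Black) that a Brauer--Severi variety carrying a degree-$1$ zero-cycle over $K$ has a $K$-point. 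Your argument instead observes that each closed point $P$ in the support of $D$ already splits $X$ over its residue field $L_P$, and then applies $\mathrm{cor}_{L_P/K}\circ\mathrm{res}_{L_P/K} = [L_P:K]$ in $\Br(K)$ to force the period of $\delta(c_X)$ to divide every $[L_P:K]$, hence $\deg(D)$; together with $(N{+}1)$-torsion this kills the class outright, and injectivity of $\delta$ finishes. The gain is real: you avoid the Sylow diagram chase and the external citation entirely, replacing them with a one-line restriction--corestriction step. The paper's route has the minor side benefit of actually producing a degree-$1$ zero-cycle on $X$, which could be useful elsewhere, but for the proposition as stated your argument is cleaner. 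One small point worth making explicit in a write-up: since ``defined over $K$'' here means $G_K$-invariant as a cycle on $X_{\overline{K}}$, the residue fields $L_P$ arising from the Galois orbits are automatically separable over $K$, so the corestriction map is available.
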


    \begin{proof}
        We consider the two exact sequences
        \begin{equation*}
            1 \to \mu_{N+1} \to \text{SL}_{N+1}(\overline{K}) \to
            \PGL_{N+1}(\overline{K}) \to 1
        \end{equation*}
        and
        \begin{equation*}
            1 \to \mu_{N+1} \to \overline{K}^{\ast} \xrightarrow{z \to z^{N+1}}
                \overline{K}^{\ast} \to 1.
        \end{equation*}
        We take their cohomology to determine maps
        \begin{align*}
            0& \to H^{1}(G_K,\PGL_{N+1}) \to H^2(G_K,\mu_{N+1}), \text{ and}\\
             0 &\to H^{2}(G_K,\mu_{N+1}) \to H^{2}(G_K,\overline{K}^{\ast}) \xrightarrow{z \to z^{N+1}}
                H^{2}(G_K,\overline{K}^{\ast}) \to \cdots
        \end{align*}
        Note: we use the facts that $H^{1}(G_k,SL_{N+1}) $ is trivial by \cite[Chapter~X, Corollary to Proposition~3]{Serre2} and
            $H^{1}(G_K,\overline{K}^{\ast}) $ is trivial by Hilbert's Theorem~90.   Hence,
        \begin{equation*}
            H^{1}(G_K,\PGL_{N+1}) \hookrightarrow H^2(G_K,\mu_{N+1}) \cong \Br(K)[N+1]
            \subset \Br(K).
        \end{equation*}

        Let $c_X$ be the cohomology class associated to $X$ as in Proposition~\ref{prop2}.  Choose a Galois
        extension $M/K$ such that $X(M) \neq \emptyset$.  Then by Proposition~\ref{prop2}(b),
        the restriction of $c_X$ to $H^{1}(G_M,\PGL_{N+1})$ is trivial, so
        $c_X$ comes from an element in $H^1(G_{M/K},\PGL_{N+1}(M))$, which we will also denote by~$c_X$.

        For any prime $p$ relatively prime to $N+1$, let $G_p$ denote the $p$-Sylow
        subgroup of $G_{M/K}$, and let $M_p = M^{G_p}$ be the fixed field of $G_p$.
        Taking appropriate inflation and restriction maps gives the following
        commutative diagram:
        \begin{equation*}
            \xymatrix{
                H^{1}(G_K,\PGL_{N+1}) \ar@{^{(}->}[r]^-{1-1} & H^2(G_K,\mu_{N+1})
                \ar[r]^-{\sim}
                & \Br(K)[N+1] \subset \Br(K) \ar[d]_-{\text{Res}}\\
                H^1(G_{M/K},\PGL_{N+1}(M)) \ar[u]_-{\text{Inf}}
                \ar[d]^-{\text{Res}}& & \Br(M_p) \\
                H^1(G_p,\PGL_{N+1}(M)) \ar@{^{(}->}[rr]^-{1-1}& & H^2(G_p,M^{\ast})
                \ar[u]^-{\text{Inf}}_-{1-1}
            }.
        \end{equation*}
        The bottom row comes from taking $G_p $ cohomology of the exact
        sequence
        \begin{equation*}
            1 \to M^{\ast} \to \text{GL}_{N+1} \to \PGL_{N+1}(M) \to 1
        \end{equation*}
        and using the fact that $H^1(G_p,\GL_{N+1}) = 1$ from \cite[X.Proposition 3]{Serre2}.

        If we start with $c_X \in H^1(G_{M/K},\PGL_{N+1}(M))$ and trace it around the
        diagram to $\Br(M_p)$, we find that it has order dividing $N+1$ since it maps
        through $\Br(K)[N+1]$, and it has order a power of $p$ since it maps through
        $H^1(G_p,M^{\ast})$.  Hence, the image of $c_X$ in $\Br(M_p)$ is $0$.  The
        injectivity of the maps along the bottom and up the right-hand side shows
        that $\text{Res}(c_X) = 0$ in $H^1(G_p,\PGL_{M_p})$.  So by Proposition
        \ref{prop2}(b), we have $X(M_p) \neq \emptyset$.

        Let $P \in X(M_p)$ and let
        $P_1,\ldots,P_r$ be the complete set of $M_p/K$ conjugates of $P$. Then $r$ is
        prime to $p$ because it divides $[M_p:K]$, and the zero-cycle $(P_1) + \cdots +(P_r)$
        is defined over $K$.  In other words, there is a zero-cycle $D_p$ on $X$ defined over $K$ whose degree is prime to $p$.

        So for all primes $p$ relatively prime to $(N+1)$, we have a zero-cycle $D_p$ with degree prime to $p$.  Hence, the greatest common divisor of the set
        \begin{equation*}
            \{\deg(D_p) \mid (p,N+1)=1\}
        \end{equation*}
        is a product of divisors of $(N+1)$.

        Given a zero-cycle $D$ defined over $K$ and with $\deg(D)$  relatively prime to
        $N+1$, we can find a finite linear combination
        \begin{equation*}
            E = nD + \sum_{(p,N+1)=1} n_pD_p \qquad \text{with } \deg(E) = 1.
        \end{equation*}
        Since $E$ is defined over $K$, we have found a zero-cycle of degree $1$ that is
        defined over $K$.  Applying classical results on homogeneous spaces (or \cite[Theorem 0.3]{Black}), we have $X(K) \neq \emptyset$.
    \end{proof}

\section{Proof of the Main Theorem} \label{sect.trivial.stabilizer}
    In this section, we focus on dynamical systems $\xi$ with trivial stabilizer; i.e., we assume that $\calA_{\phi} = \{\id \}$ for all $\phi \in \xi$.

    \begin{prop} \label{thm_trivial}
        Let $\xi \in M_d^N(K)$ be a dynamical system with trivial stabilizer.
        \begin{enumerate}
            \item \label{thm_trivial_item1} There is a  cohomology class $c_{\xi} \in H^1(G_K,\PGL_{N+1})$ such that for any $\phi \in \xi$ there is a one-cycle $f: G_K \to \PGL_{N+1}$ in the class of $c_{\xi}$ so that
                \begin{equation*}
                    \phi^{\sigma} = \phi^{f_{\sigma}} \quad \text{for all } \sigma \in G_K.
                \end{equation*}

            \item \label{thm_trivial_item2} Let $X_{\xi}/K$ be the Brauer-Severi variety associated to the cohomology class $c_\xi$.  Then for any $\phi \in \xi$ there exists an isomorphism $i:\P^N \to X_{\xi}$ defined over $\overline{K}$ and a rational map $\Phi: X_{\xi} \to X_{\xi}$ defined over $K$ so that the following diagram commutes:
                \begin{equation}\label{eq4}
                    \xymatrix{\P^N \ar[d]_{\wr}^{i_{/\overline{K}}} \ar[r]^{\phi_{/\overline{K}}}& \P^N \ar[d]_{\wr}^{i_{/\overline{K}}} \\ X_{\xi} \ar[r]^{\Phi_{/K}} & X_{\xi}.}
                \end{equation}

            \item \label{thm_trivial_item3} The following are equivalent.
                \begin{enumerate}
                    \item \label{thm_trivial_item3_1} $K$ is a field of definition for $\xi$.
                    \item \label{thm_trivial_item3_2} $X_{\xi}(K) \neq \emptyset$.
                    \item \label{thm_trivial_item3_3} $c_{\xi}=1$.
                \end{enumerate}
        \end{enumerate}
    \end{prop}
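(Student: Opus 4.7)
The plan is to leverage Proposition~\ref{prop1} with $\calA_\phi = \{\id\}$, which collapses every ``up to stabilizer'' clause there into an honest equality. For part~(\ref{thm_trivial_item1}), given any $\phi \in \xi$, I will apply Proposition~\ref{prop1}(\ref{prop1_item1}) to obtain a \emph{unique} $f_\sigma \in \PGL_{N+1}$ with $\phi^\sigma = \phi^{f_\sigma}$, then invoke (\ref{prop1_item2}) to conclude that $f_\sigma f_\tau^\sigma f_{\sigma\tau}^{-1} = \id$, so $f$ is a genuine 1-cocycle. If $\Phi \in \xi$ is any second representative with associated cocycle $F$, Proposition~\ref{prop1}(\ref{prop1_item3}) produces $g \in \PGL_{N+1}$ with $g^{-1}F_\sigma g^\sigma f_\sigma^{-1} = \id$, i.e.\ $f$ and $F$ are cohomologous. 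Hence the class $c_\xi \in H^1(G_K,\PGL_{N+1})$ depends only on $\xi$.

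For part~(\ref{thm_trivial_item2}), I will take $X_\xi$ to be the Brauer-Severi variety attached to $c_\xi$ via Proposition~\ref{prop2}(\ref{prop2_item1}). Any $\overline K$-isomorphism $j : \P^N \to X_\xi$ yields a cocycle $\sigma \mapsto j^{-1} \circ j^\sigma$ representing $c_\xi$, and replacing $j$ by $j \circ g$ with $g \in \PGL_{N+1}$ modifies this cocycle by the coboundary $\sigma \mapsto g^{-1}(j^{-1}j^\sigma)g^\sigma$. Since $f$ also represents $c_\xi$, a suitable choice of $g$ gives $i := j \circ g$ with $i^{-1} \circ i^\sigma = f_\sigma$ on the nose. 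I then define $\Phi := i \circ \phi \circ i^{-1}$, which is \emph{a priori} only $\overline K$-rational, and verify Galois invariance by the telescoping calculation
\begin{equation*}
\Phi^\sigma = i^\sigma \circ \phi^\sigma \circ (i^{-1})^\sigma = i^\sigma \circ f_\sigma^{-1} \circ \phi \circ f_\sigma \circ (i^\sigma)^{-1} = i \circ \phi \circ i^{-1} = \Phi,
\end{equation*}
where the third equality uses $i^\sigma \circ f_\sigma^{-1} = i$ and $f_\sigma \circ (i^\sigma)^{-1} = i^{-1}$, both immediate from the pointwise normalization $f_\sigma = i^{-1} \circ i^\sigma$.

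For part~(\ref{thm_trivial_item3}), the equivalence (\ref{thm_trivial_item3_2}) $\iff$ (\ref{thm_trivial_item3_3}) is precisely Proposition~\ref{prop2}(\ref{prop2_item2}). To prove (\ref{thm_trivial_item3_3}) $\Rightarrow$ (\ref{thm_trivial_item3_1}), I will write $f_\sigma = g^{-1} g^\sigma$ for some $g \in \PGL_{N+1}$ and apply Proposition~\ref{prop1}(\ref{prop1_item4}) with $\calA_\phi = \{\id\}$ to conclude that $K$ is a field of definition. Conversely, if $\xi$ has a representative $\Phi$ defined over $K$, then $F_\sigma \equiv \id$ is an admissible cocycle for $\Phi$ in the sense of part~(\ref{thm_trivial_item1}), so $c_\xi = 1$.

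The main obstacle is the normalization step in part~(\ref{thm_trivial_item2}): one must promote the fact that $f$ and $\sigma \mapsto j^{-1} j^\sigma$ represent the \emph{same} cohomology class into an explicit isomorphism $i$ that realizes $f_\sigma = i^{-1} \circ i^\sigma$ pointwise, since it is only with this on-the-nose identification that the telescoping in the displayed computation cancels to give $\Phi^\sigma = \Phi$. Everything else is essentially bookkeeping driven by Propositions~\ref{prop1} and~\ref{prop2}.
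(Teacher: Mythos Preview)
Your proposal is correct and follows essentially the same approach as the paper: part~(\ref{thm_trivial_item1}) via Proposition~\ref{prop1}(\ref{prop1_item1})--(\ref{prop1_item3}) with trivial stabilizer, part~(\ref{thm_trivial_item2}) by choosing $i = jg$ so that $i^{-1}i^\sigma = f_\sigma$ on the nose and then verifying $\Phi^\sigma = \Phi$, and part~(\ref{thm_trivial_item3}) directly from Propositions~\ref{prop2}(\ref{prop2_item2}) and~\ref{prop1}(\ref{prop1_item4}). The only cosmetic difference is that in part~(\ref{thm_trivial_item2}) the paper carries out the Galois-invariance computation by expanding everything in terms of $j$ and $g$, whereas you use the normalized identity $f_\sigma = i^{-1}i^\sigma$ directly, which makes the telescoping slightly cleaner; the underlying argument is identical.
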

    \begin{proof}
        (\ref{thm_trivial_item1}) Since $\mathcal{A}_{\phi} = \{\id \}$, the result follows immediately from the following:
        \begin{itemize}
        \item
        Proposition \ref{prop1}(\ref{prop1_item1}) says that  $\phi$ determines $f:G_K \to \PGL_{N+1}$.
        \item
        By Proposition \ref{prop1}(\ref{prop1_item2}), $f$ is a one cocycle.
        \item
        By Proposition \ref{prop1}(\ref{prop1_item3}), any other choice of $\phi \in \xi$ gives a cohomologous cocycle.
        \end{itemize}

        (\ref{thm_trivial_item2})  Let $j:\P^N \to X_{\xi}$ be a $\overline{K}$-isomorphism, so $c_{\xi}$ is the cohomology class associated to the cocycle $\sigma \mapsto j^{-1}j^{\sigma}$.  However, from (\ref{thm_trivial_item1}) we know that $c_{\xi}$ is associated to the cocycle $\sigma \mapsto f_{\sigma}$.  Thus, these two cocycles must be cohomologous, meaning there is an element $g \in \PGL_{N+1}$ so that
        \begin{equation*}
            f_{\sigma} = g^{-1}(j^{-1}j^{\sigma})g^{\sigma} \quad \text{ for all } \sigma \in G_K.
        \end{equation*}
        Define $i=jg$, and $\Phi = i\phi i^{-1}$. Then diagram (\ref{eq4}) commutes and it only remains to check that $\Phi$ is defined over $K$.  For any $\sigma \in G_K$, we compute
        \begin{align*}
            \Phi^{\sigma} &= i^{\sigma}\phi^{\sigma}(i^{-1})^{\sigma} =(j^{\sigma}g^{\sigma})(f_{\sigma}^{-1}\phi f_{\sigma})(j^{\sigma}g^{\sigma})^{-1}\\
            &=(j^{\sigma}g^{\sigma}) (g^{-1}j^{-1}j^{\sigma}g^{\sigma})^{-1} \phi (g^{-1}j^{-1}j^{\sigma}g^{\sigma}) (j^{\sigma}g^{\sigma})^{-1}\\
            &= jg \phi g^{-1}j^{-1} = i \phi i^{-1}\\
            &=\Phi.
        \end{align*}

        (\ref{thm_trivial_item3})  The equivalence of (\ref{thm_trivial_item3_2}) and (\ref{thm_trivial_item3_3}) was already proven in Proposition \ref{prop2}(\ref{prop2_item2}).  The equivalence of (\ref{thm_trivial_item3_1}) and (\ref{thm_trivial_item3_3}) follows from Proposition \ref{prop1}(\ref{prop1_item4}) when $\mathcal{A}_{\phi} = \{\id \}$.
    \end{proof}

We now prove  Theorem~\ref{thm.main}.

\begin{rem}
It may seem that to apply Theorem \ref{thm.main} we must test infinitely many cases.  In fact, we need only check $\gcd(D_n, N+1) $ for $n \in [1,\varphi(N+1)]$, where $\varphi$ is the Euler totient function.
\end{rem}

    \begin{proof}[Proof of Theorem \ref{thm.main}]
        Take any $\phi \in \xi$ and choose $i: \P^N \to X_{\xi}$ and $\Phi:X_{\xi} \to X_{\xi}$ as in Proposition \ref{thm_trivial}(\ref{thm_trivial_item2}) so that the diagram in~\eqref{eq4} commutes.

        Let $\Gamma_n \subseteq \P^N \times \P^N$ be the graph of $\Phi^n$ and let $\Delta$ be the diagonal map on $X_\xi$.  Define 
        \[\Per_n(\Phi) = \Delta^*(\Gamma_n);\]
         it denotes the set of periodic points of period $n$ of $\Phi$ taken with multiplicity.

       The zero-cycle $\Per_n(\Phi)$ has degree $D_n$ and is defined over $K$ (see \cite[Proposition 4.17]{HutzDynCyc}).
        Hence, $X_{\xi}$ has a zero-cycle of degree relatively prime to $N+1$.  Thus, from Proposition \ref{prop2}, $X_{\xi}(K) \neq \emptyset$ and by Proposition \ref{thm_trivial}(\ref{thm_trivial_item3}) $K$ is a field of definition for $\xi$.
    \end{proof}

    \begin{rem}
        Silverman \cite{Silverman12} showed that whenever $\xi \in M_d^1(K)$ with $d$ even, then $K$ is a field of definition for $\xi$.  When $\xi$ has trivial stabilizer, this is a special case of Theorem \ref{thm.main}.
    \end{rem}

\section{Examples} \label{sect_examples}
In this section, we demonstrate that both hypotheses in Theorem~\ref{thm.main} are necessary.  First, we give a map where $\gcd(D_n, N+1) > 1$ for all positive integers $n$, and we show that the field of moduli is not a field of definition.  Then, we give a map where $\gcd(D_1, N+1) = 1$ but the stabilizer group is nontrivial, and we show that the field of moduli is not a field of definition.

    The second example is particularly  interesting because it demonstrates a striking difference between  $\Hom_d^1$ and $\Hom_d^N$ for $N>1$. In dimension 1, the \emph{condition} for the field of moduli to be a field of definition is independent of the stabilizer group, but the proof is more complicated in the case of nontrivial stabilizer.  Our example shows that in higher dimension, the condition (not just the proof) must, in fact, be different for maps with nontrivial stabilizer.

\subsection{Example 1}\label{subsec:example1}
    We construct a map $\phi:\P^2 \to \P^2$ with trivial stabilizer group whose field of moduli is $\Q$ but for which $\Q$ is not a field of definition.  This is explained by the fact that  $\gcd(D_n,N+1) = \gcd(D_n, 4) > 1$ for all $n$.
	
	Consider the map
	\begin{equation*}
		\phi = [(x-iz)^4,(y+iz)^4,z^4]:\P^2 \to \P^2.
	\end{equation*}
	
\subsubsection*{Claim 1:} $\phi$ has trivial stabilizer.
	\begin{proof}
		It is clear that any element $f \in \mathcal{A}_{\phi}$ must be of the form
		\begin{equation*}
			\begin{pmatrix}
				a_0 & a_1 & a_2\\
				b_0 & b_1 & b_2\\
				0 & 0 & 1
			\end{pmatrix}.
		\end{equation*}
		Then by direct computation we can see there are no solutions to
		\begin{equation*}
			\phi^f = \phi.
		\end{equation*}
%
	\end{proof}

	\subsubsection*{Claim 2:} $\phi$ has field of moduli $\Q$.
	\begin{proof}
		Notice that $\phi$ is defined over $\Q(i)$. Let $\sigma \in \Gal(\Q(i))$ represent complex conjugation. If we can exhibit an $f$ such that $\phi^f = \phi^{\sigma}$, then we have that the field of moduli is $\Q$. The element
		\begin{equation}\label{eqn:fsigma}
			f=\begin{pmatrix}
				0 & \zeta_6 & 0\\
				\zeta_6 & 0 & 0\\
				0 & 0 & \zeta_6
			\end{pmatrix}
		\end{equation}
		where $\zeta_6$ is a primitive $6\tth$ root of unity, has the necessary property.
		
%
	\end{proof}

	\subsubsection*{Claim 3:} $\Q$ is not a field of definition for $[\phi]$.
	\begin{proof}
        From Proposition \ref{prop1}\ref{prop1_item4} and the fact that  $\calA_\phi = \{\id \}$, we know that $\Q$ is a field of definition for $[\phi]$ if and only if there exists a $g \in \PGL_{N+1}$ such that
        \begin{equation*}
            g^{\sigma}f_{\sigma}^{-1} = g \qquad \forall \sigma \in \Gal(\overline{K}/K).
        \end{equation*}
        In that case, we have $\phi^{g^{-1}}$  defined over $\Q$.

        Since $\Q(i)$ is a field of definition for $\phi$, we only need to consider $\sigma$ to be complex conjugation, so the   associated $f_{\sigma}$ is given by~\eqref{eqn:fsigma}.
 We now show that if $g$ satisfies $g^{\sigma}f_{\sigma}^{-1} = g$, then necessarily $g \in \PGL_3(\Q(i))  \smallsetminus \PGL_3(\Q)$.

            Recall we are  assuming that $g^{-1}$ conjugates $\phi$  to some $\psi\in \Hom_4^2(\Q)$. Let $K$ be the Galois closure of the field of definition of $g$, and let $G = \Gal(K/\Q)$. Write $H= \Gal(K/\Q(i)) \unlhd G$. For $\sigma \in H$, we have
            \begin{align*}
                \psi &= g \circ \phi \circ g^{-1}, \text{ so}\\
              \psi^{\sigma} &= g^{\sigma} \circ \phi^{\sigma} \circ  (g^{-1})^{\sigma}
              = g^{\sigma} \circ \phi \circ (g^{-1})^{\sigma}, \text{ since } \phi \in \Hom_4^2\left(\Q(i)\right).
            \end{align*}
            Since we assume $\psi$ is defined over $\Q$, $\psi^\sigma = \psi$ and we have
             $\{(g^{-1})^{\sigma} \col \sigma \in H\}  $ is contained in the conjugating set
             \[
             \Conj_{\phi,\psi} = \{ f \in \PGL_3 \colon \phi^f = \psi \}.
             \]

 But when $\Conj_{\phi,\psi}$ is non-empty, it is a principle homogeneous space for $\calA_{\phi}$ (see \cite[Section 3]{FMV}). Since $\calA_{\phi}$ is trivial, we conclude that $\#\Conj_{\phi,\psi} \leq 1$. In other words, $(g^{-1})^{\sigma} = g^{-1}$ for all $\sigma \in H$, meaning $g^{-1}\in \PGL_3\left(\Q(i)\right)$.   But clearly $g^{-1} \not \in \PGL_3(\Q)$ since that would imply $\phi = \psi^g$ is defined over $\Q$ as well.

        Now that we know that $g \in \PGL_3(\Q(i))$, we can simply set up a system of equations with rational coefficients from
        \begin{equation*}
            g^{\sigma}f_{\sigma}^{-1} = g \qquad
        \end{equation*}
        by equating each entry in the matrices.
        The resulting system has solutions only of the form
        \begin{equation*}
            \begin{pmatrix}
                0&0&a-ai\\
                0&0&b-bi\\
                0&0&c-ci
            \end{pmatrix}
        \end{equation*}
        which are clearly not elements of $\PGL_3$. Thus, the necessary $g$ does not exist and $\Q$ is not a field of definition.
        \end{proof}

		\subsubsection*{Claim 4:} $\gcd(D_n,N+1) > 1$ for all $n$.
	\begin{proof}
		We have $N=2$ so the only possible divisors of $N+1$ are $1,3$. With $d=4$ we compute
		\begin{equation*}
			D_n = 1 + 4^n + 4^{2n} \equiv 1 + 1 + 1 \equiv 0 \pmod{3}.
		\end{equation*}
	\end{proof}
	We see that $\phi$ is a morphism of $\P^2$ with trivial stabilizer; but since $\gcd(D_n, N+1) > 1$ for all $n$,  the conclusions of Theorem \ref{thm.main} do not apply.

\subsection{Example 2}\label{subsec:example2}
    We construct a map $\phi:\P^2 \to \P^2$ with $\gcd(D_1,N+1) =\gcd(D_1, 3)= 1$, whose field of moduli is $\Q$ but for which $\Q$ is not a field of definition.  This is explained by the fact that   $\mathcal A_\phi$ is nontrivial.
	
	Consider the map
	\begin{equation*}
		\phi = [i(x-y)^3,(x+y)^3,z^3]: \P^2 \to \P^2.
	\end{equation*}
	Note that $D_1 = 1 + 3 + 3^2 = 13$, so indeed $\gcd(D_1,N+1) = 1$.

\subsubsection*{Claim 1:} $\phi$ has field of moduli $\Q$.

\begin{proof}
		Notice that $\phi$ is defined over $\Q(i)$. Let $\sigma \in \Gal(\Q(i))$ represent complex conjugation. If we can exhibit an $f$ such that $\phi^f = \phi^{\sigma}$, then we have that the field of moduli is $\Q$. A calculation shows that
    \begin{equation*}
    f =
    	\begin{pmatrix} 0 &-\zeta_8&0\\\zeta_8&0&0\\0&0&-\zeta_8^2\end{pmatrix}
    \end{equation*}
    has this property. Here $\zeta_8$ is a primitive eighth root of unity.
    \end{proof}



\subsubsection*{Claim 2:}
     $\Q$ is not a field of definition.

\begin{proof}
If $\psi$ is conjugate to this map by $g \in \PGL_3$, then we may assume that $g$ fixes the line $L: \{z=0\}$.
Moreover, the image $g(L)$ must be totally ramified and fixed by $\psi$, and it is the unique line with these properties. Hence, $g(L)$ must be stable under the action of $\Gal(\bar{\Q}/\Q)$ (i.e., defined over $\Q$). So,   conjugating by an element of $ \PGL_3(\Q)$  and renaming $g$ appropriately, we may assume that $g(L) = L$.

Now, restricting $\phi$ to the first two coordinates gives a morphism of $\mathbb{P}^1$ defined by
    \[
    \alpha(z) = i\frac{(z-1)^3}{(z+1)^3}.
 \]
 If $\phi$ has field of definition $\Q$ and we  take the conjugating map $g$ to fix $L$, then restricting $\psi = \phi^g$ to the first two coordinates yields a map conjugate to $\alpha$ and defined over $\Q$.
    But Silverman shows in~\cite{Silverman12}  that $\Q$ is not a field of definition for $\alpha$. So $\Q$ is not a field of definition for $\phi$.
    \end{proof}

	\subsubsection*{Claim 3:} $\phi$ has nontrivial stabilizer.
	
	\begin{proof}
	Let
	\begin{equation*}
		g =
		\begin{pmatrix}
			-1 & 0&0\\0&-1&0\\0&0&1
		\end{pmatrix}.
	\end{equation*}
	It is a simple matter to check that $\phi^g =\phi$, so $g \in  \calA_{\phi)}$.
	\end{proof}


In fact, this  argument shows that
	\begin{equation*}
		\phi = [i(x-y)^d,(x+y)^d,z^d]: \P^2 \to \P^2
	\end{equation*}
	has field of moduli $\Q$ but field of definition at least quadratic over $\Q$ whenever $d$ is odd.

\subsection*{Acknowledgements}
The authors thank Rafe Jones and Joe Silverman for helpful conversations.   Thanks also to Xander Faber for a suggested strategy for the proofs in Section~\ref{subsec:example1} and Section~\ref{subsec:example2}.

\bibliography{auto_bib}
\bibliographystyle{plain}

\end{document}